\title{Consimilarity and quaternion
matrix equations
${AX-\hat XB=C}$, ${X-A\hat
XB=C}$\thanks{\emph{Special Matrices} 2 (2014) 180--186\qquad
http://www.degruyter.com/view/j/spma}}
\author{Tatiana Klimchuk\\
Kiev National Taras Shevchenko
University\\
Kiev, Ukraine, klimchuk.tanya@gmail.com
\and
Vladimir V. Sergeichuk%
\\
Institute of Mathematics,
Tereshchenkivska 3\\
Kiev, Ukraine, sergeich@imath.kiev.ua}
\date{}
\renewcommand{\ge}{\geqslant}
\newcommand{\HH}{\mathbb H}
\newcommand{\CC}{\mathbb C}
\newcommand{\RR}{\mathbb R}
\newtheorem{theorem}{Theorem}
\newtheorem{lemma}[theorem]{Lemma}
\newtheorem{corollary}[theorem]{Corollary}
\theoremstyle{definition}
\theoremstyle{remark}
\newtheorem{example}{Example}[section]
\begin{document}
\maketitle

\begin{abstract}
L. Huang [Consimilarity of quaternion
matrices and complex matrices, Linear
Algebra Appl. 331 (2001) 21--30] gave a
canonical form of a quaternion matrix
with respect to consimilarity
transformations $A\mapsto \tilde
S^{-1}AS$ in which $S$ is a nonsingular
quaternion matrix and \[h=a+bi+cj+dk\
\mapsto\ \tilde h:=a-bi+cj-dk\quad
(a,b,c,d\in\RR).\] We give an analogous
canonical form of a quaternion matrix
with respect to consimilarity
transformations $A\mapsto\hat S^{-1}AS$
in which $h\mapsto \hat h$ is an
arbitrary involutive automorphism of
the skew field of quaternions. We apply
the obtained canonical form to the
quaternion matrix equations ${AX-\hat
XB=C}$ and ${X-A\hat XB=C}$.

{\it AMS classification:} 15A21; 15A24;
15B33

{\it Keywords:} Quaternion matrices;
Consimilarity; Matrix equations
\end{abstract}

\section{Introduction}

We give a canonical form of a square quaternion matrix with respect to \emph{consimilarity transformations}
$A\mapsto \hat S^{-1}AS$ in which $S$
is a nonsingular quaternion matrix and $\alpha \mapsto \hat \alpha $ is a fixed involutive automorphism of the skew field of quaternions $\HH$, and apply it to the
quaternion matrix equations ${AX-\hat
XB=C}$ and ${X-A\hat XB=C}$.

A canonical form of a square complex
matrix $A$ with respect to {consimilarity transformations}
\begin{equation}\label{jhr}
A\mapsto \bar S^{-1}AS\qquad (S\in\CC^{n\times n}\text{
is nonsingular})
\end{equation}
was given by Hong and Horn \cite{hon}
(see also \cite[Theorem 4.6.12]{j-h}):
$A$ is consimilar to a direct sum,
uniquely determined up to permutation
of direct summands, of matrices of the
following two types:
\begin{equation}\label{fei}
J_k(a)\ (a\in\RR,\ a\ge 0),\quad
\begin{bmatrix}
  0&I_k\\J_k(a+bi)&0
\end{bmatrix}\
  (a,b\in\RR,\ a<0\text{ if }b=0)
\end{equation}
in which
\begin{equation}\label{kyr}
J_k(\lambda ):=
\begin{bmatrix}
  \lambda  &1&&0 \\
  &\lambda &\ddots \\
  &&\ddots&1\\0&&&\lambda
\end{bmatrix}
\end{equation}
is the
$k\times k$ Jordan block with
eigenvalue $\lambda $.

Huang \cite{hua} defined consimilarity
transformations of square quaternion
matrices
\begin{equation}\label{jkg}
A\mapsto \tilde S^{-1}AS\qquad
(S\in\HH^{n\times n}\text{
is nonsingular})
\end{equation}
via the involution
\begin{equation}\label{mjd}
h=a+bi+cj+dk\ \mapsto\
\tilde h:=-jhj=a-bi+cj-dk\quad (a,b,c,d\in\RR);
\end{equation}
on the skew field of quaternions $\HH$;
we suggest to call \eqref{jkg}
\emph{$j$-consimilarity
transformations} since $\tilde S=-jSj$. (The main reason why
consimilarity transformations of
quaternion matrices are not defined via
the quaternion conjugation
$h\mapsto\bar h=a-bi-cj-dk$ is that the
transformations $A\mapsto\bar S^TAS$
are not transitive.)

Huang \cite[Theorem 1]{hua} noticed
that two quaternion matrices
\begin{equation}\label{liy}
\text{$A$ and $B$
are $j$-consimilar}\ \ \Longleftrightarrow
\ \ \text{$jA$ and $jB$
are similar},
\end{equation}
which holds since $\tilde
S^{-1}AS=-jS^{-1}j\cdot AS=B$ if and
only if $S^{-1}jAS=jB$. This statement
admitted him to deduce a canonical form
of quaternion matrices under
$j$-consimilarity from the canonical
form for similarity. Huang's canonical
form given in {\cite[Theorem 3]{hua}}
is a direct sum, uniquely determined up
to permutation of summands, of Jordan
blocks
\begin{equation}\label{jts}
J_k(a+bj)\qquad (a,b\in\mathbb R,\
a\ge 0).
\end{equation}

The definition of consimilarity
transformations via \eqref{mjd} looks
special. We show in Lemma \ref{lih} that all consimilarity
transformations defined via involutive
automorphisms of $\HH$ are transformed from
one to each other by reselections of the
set of orthogonal imaginary units
$i,j,k$. Thus, we can use an involutive
automorphism for which the formulas of the theory of consimilarity transformations are simpler.

Huang \cite{hua} and other
authors (see, for example, \cite{jia,son1,son,song,yua}) study the $j$-consimilarity
transformations \eqref{jkg}. These
transformations act on complex matrices as the consimilarity transformations
\eqref{jhr}. We
suggest to define consimilarity
transformations of quaternion matrices
extending the similarity
transformations of complex matrices.
For this purpose, we define
\emph{$i$-consimilarity
transformations} $\hat S^{-1}AS$ of
quaternion matrices via the
automorphism
\begin{equation*}\label{mmy}
h=a+bi+cj+dk\ \mapsto\
\hat h:=-ihi=a+bi-cj-dk\quad (a,b,c,d\in\RR).
\end{equation*}
Our goal is to show that
$i$-consimilarity transformations are
more convenient for use than
$j$-consimilarity transformations
\eqref{jkg}:
\begin{itemize}
  \item Most properties of
      $j$-consimilarity
      transformations of quaternion
      matrices are closer to
      properties of similarity
      transformations of complex
      matrices than to properties
      of consimilarity
      transformations of complex
      matrices (see \eqref{liy} and
      compare the canonical forms
      given in \eqref{fei} and
      \eqref{jts}).

  \item If $A=U+Vj$, in which
      $U,V\in \HH^{m\times n}$,
      then $\hat A=U-Vj$ (compare
      with $\tilde A=\bar U+\bar
      Vj$ in \cite{yua}), which
      admits us to study similarity
      and $i$-consimilarity
transformations of quaternion
matrices simultaneously in
      Sections \ref{lkv} and
      \ref{jhf}.

  \item The canonical form in
      Theorem \ref{lsd} of a
      quaternion matrix for
      $i$-consimilarity
      transformations $\hat
      S^{-1}AS$ is a complex matrix
      (compare with \eqref{jts});
      in Section \ref{jhf} we apply
      it to the quaternion matrix
      equations ${AX-\hat XB=C}$
      and ${X-A\hat XB=C}$,
      reducing them to complex
      matrix equations.
\end{itemize}

\section{A canonical form for $i$-consimilarity}
\label{lkv}

Consimilarity transformations come from
the theory of semilinear
operators (which is presented, for example, in Jacobson's book \cite[Chapter 3, Section 12]{jac}). It is worth mention that the
fundamental theorem of projective
geometry is formulated in terms of
semilinear maps between vector spaces.

Let $V$ be a right vector space over a field
or skew field $\mathbb F$ with a fixed
automorphism $\alpha \mapsto \hat
\alpha $ on $\mathbb F $. A map
$\mathcal A:V\to V$ is a
\emph{semilinear operator} if
\[\mathcal A(v+w)=\mathcal Av+\mathcal
Aw,\qquad \mathcal A(v\alpha)=(\mathcal Av)\hat\alpha \] for all
$v,w\in V$ and $\alpha \in\mathbb F$. If $A$ is the matrix of $\cal A$ in some basis and $[v]$ is the coordinate vector of $v\in V$, then $[\mathcal Av]=A\widehat{[v]}$.
By transfer to other bases the matrix $A$ is reduced by transformations
\begin{equation}\label{liy1}
A\mapsto C^TA\hat C \qquad (C \text{ is the change of basis matrix).}
\end{equation}
For the most important types of automorphisms
$\alpha \mapsto \hat \alpha $, Jacobson
\cite[Chapter 3, Theorem 34]{jac} reduced the
problem of classifying matrices with respect to transformations \eqref{liy1} to the
problem of classifying matrices with respect to  similarity.

All automorphisms $h\mapsto\hat h$ on
$\HH$ that we consider are
\emph{involutive}; that is, $\Hat{\Hat
h}=h$ for all $h\in \HH$.

\begin{lemma}\label{lih}
If $h\mapsto\hat h$ is an involutive
automorphism of $\HH$, then either it
is identical, or the set of orthogonal
imaginary units $i,j,k$ can be chosen
such that
\begin{equation}\label{cwo}
\hat h= h^{i}:=i^{-1}hi=-ihi=a+bi-cj-dk
\end{equation}
for each $h=a+bi+cj+dk\in\HH$
$(a,b,c,d\in\RR)$.
\end{lemma}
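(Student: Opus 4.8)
The plan is to pass to elementary three‑dimensional geometry: an involutive automorphism of $\HH$ induces an involutive rotation of the $3$‑space of imaginary quaternions, such a rotation is either the identity or a half‑turn, and a half‑turn about an axis is exactly conjugation by the corresponding imaginary unit. Write $\varphi$ for the map $h\mapsto\hat h$.

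First I would record two structural facts about $\varphi$. Since $\varphi$ is a ring automorphism it maps the center $\RR=Z(\HH)$ onto itself, and the only ring automorphism of $\RR$ is the identity (a field endomorphism of $\RR$ is injective, is order‑preserving because every positive real is a square, and fixes the dense subfield $\mathbb Q$); hence $\varphi|_{\RR}=\mathrm{id}$ and $\varphi$ is $\RR$‑linear. Moreover the subspace $\operatorname{Im}\HH=\RR i\oplus\RR j\oplus\RR k$ is characterised inside $\HH$ as the set of $h$ with $h^{2}$ a non‑positive real, a property preserved by $\varphi$, so $\varphi(\operatorname{Im}\HH)=\operatorname{Im}\HH$. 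On $\operatorname{Im}\HH$ we have $|h|^{2}=-h^{2}$ and $\langle h,h'\rangle=-\tfrac12(hh'+h'h)$, so $\varphi$ preserves the Euclidean inner product; and $\varphi(i)\varphi(j)\varphi(k)=\varphi(ijk)=ijk=-1$ shows it also preserves orientation. Thus $R:=\varphi|_{\operatorname{Im}\HH}\in\mathrm{SO}(3)$, and since $\varphi$ is $\RR$‑linear and fixes $\RR$ pointwise, $\varphi$ is completely determined by $R$.

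Now I would use involutivity: $\varphi^{2}=\mathrm{id}$ gives $R^{2}=\mathrm{id}$, and together with $\det R=1$ this forces the eigenvalues of $R$ to be $(1,1,1)$ or $(1,-1,-1)$. In the first case $R=\mathrm{id}$, hence $\hat h=h$ for all $h$ — the identical case. In the second, $R$ fixes the line through some unit imaginary quaternion $u$ and negates $u^{\perp}$, i.e.\ $R$ is the half‑turn about $u$. Reselect the orthogonal imaginary units so that $i:=u$: extend $u$ to a standard triple $i,j,k$ (pick a unit $j\in\operatorname{Im}\HH$ orthogonal to $u$ and put $k:=ij$; one checks $i,j,k$ then satisfy the usual multiplication rules). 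Conjugation by $i$, namely $\psi\colon h\mapsto i^{-1}hi=-ihi$ (as $i^{-1}=-i$), is a ring automorphism of $\HH$ fixing $\RR$ pointwise, and since $j,k$ anticommute with $i$ a one‑line computation gives $\psi(1)=1$, $\psi(i)=i$, $\psi(j)=-j$, $\psi(k)=-k$; thus $\psi|_{\operatorname{Im}\HH}$ is the half‑turn about $u$, which is $R$. Both $\varphi$ and $\psi$ are $\RR$‑linear, agree on $\RR$ and agree on $\operatorname{Im}\HH$, so $\varphi=\psi$ and therefore $\hat h=-ihi=a+bi-cj-dk=h^{i}$ for every $h=a+bi+cj+dk$, as claimed.

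The only point that needs genuine care is the claim that $\varphi$ restricts to an element of $\mathrm{SO}(3)$ — that $\operatorname{Im}\HH$, the inner product, and the orientation are all preserved — since it is exactly this that turns ``involutive'' into the dichotomy ``identity or half‑turn''; everything else is routine quaternion arithmetic. If one prefers to bypass the geometry, the same argument runs through the Skolem--Noether theorem: every automorphism of $\HH$ is inner, $\hat h=q^{-1}hq$ with $q\in\HH^{\times}$; involutivity gives $q^{-2}hq^{2}=h$ for all $h$, so $q^{2}\in Z(\HH)=\RR$; writing $q=a+v$ with $v\in\operatorname{Im}\HH$ and using $q^{2}=a^{2}-|v|^{2}+2av$, reality of $q^{2}$ forces $a=0$ or $v=0$, which are respectively the half‑turn case (normalize and take $i:=v/|v|$) and the identical case.
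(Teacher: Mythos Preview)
Your proof is correct. Your primary argument takes a genuinely different route from the paper's: you work intrinsically, showing that any automorphism of $\HH$ fixes $\RR$ pointwise and restricts to an element of $\mathrm{SO}(3)$ on $\operatorname{Im}\HH$, and then classify the involutive rotations of $\RR^{3}$ via their eigenvalues $(1,1,1)$ or $(1,-1,-1)$. The paper instead invokes the Skolem--Noether theorem at the outset to write $\hat h=\sigma^{-1}h\sigma$ and reduces everything to the elementary algebra of when $\sigma^{2}\in\RR$ --- precisely the argument you sketch in your final paragraph as an alternative. Your geometric approach is more self-contained (no black-box theorem is needed) and makes transparent \emph{why} the nonidentity involutions are exactly half-turns; the paper's approach is shorter once Skolem--Noether is granted and delivers the conjugating element $\sigma$ directly. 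Since your last paragraph essentially reproduces the paper's proof, you have in fact supplied both arguments.
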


\begin{proof}
By the Skolem--Noether theorem
(\cite[Theorem 2.1]{vig} or
\cite[Theorem 2.41]{kna}), every
automorphism of $\mathbb H$ is of the
form \[q\mapsto q^{\sigma}:=\sigma
^{-1}q\sigma\qquad\text{for some fixed nonzero }
\sigma \in \HH.\] This automorphism is
involutive if and only if $\sigma
^{-2}q\sigma^2=q$ for all $q\in\HH$, if
and only if $\sigma^2\in\mathbb R$.
Multiplying $\sigma $ by a positive
number (which does not change the
automorphism), we get $\sigma^2=\pm 1$.
Write $\sigma=a+b\tau $, in which
$a,b\in\mathbb R$, $\tau\notin\mathbb
R$, and $|\tau|=1$. Since $\tau^2=-1$,
$\sigma ^2=(a^2-b^2)+2ab\tau=\pm 1$.
Hence $ab=0$.

If $\sigma^2=1$, then $b=0$, $a^2=1$,
$\sigma=a=\pm 1$, and the automorphism
$q\mapsto q^{\sigma }$ is the identity.

Let $\sigma^2=-1$. Then $a=0$, $b^2=
1$, $b=\pm 1$, and so $\sigma=\pm
\tau$. Recall that the space of pure quaternions can be identified with the vector space $\RR^3$; the product of pure quaternions $u$ and $v$ can be represented in the form
$
uv= (u,v)+u\times v
$
in which $(u,v)$ is the usual inner (scalar) product and $u\times v$ is the vector cross product.
Reselecting the imaginary units
$i,\,j,\,k$, we set $i:=\tau$, take as
$j$ any pure quaternion such that $|j|=1$ and  $(i,j)=0$ (i.e, $j$ is any imaginary unit that is perpendicular to $i$), put $k:=ij$, and
obtain the automorphism \eqref{cwo}.
\end{proof}

The automorphism \eqref{cwo}
can be written in the form
\begin{equation*}\label{grs}
h=u+vj\ \mapsto\ h^{i}=u-vj\qquad (u,v\in\CC).
\end{equation*}
Thus, $h^{\sigma }=u+\sigma ^2v$ for $\sigma \in\{1,i\}$ and all $h\in\HH$. This admits us to study similarity and
consimilarity transformations
simultaneously using the following
definition.

Let $\sigma \in\{1,i\}$. By
\emph{$\sigma $-consimilarity
transformations of quaternion
matrices} we mean the transformations
\begin{equation*}\label{nhc}
A\mapsto S^{-\sigma}AS:=(\sigma^{-1}S^{-1}
\sigma)
AS=\begin{cases}
S^{-1}AS&\text{if }\sigma =1,\\
-iS^{-1}iAS&\text{if }\sigma =i,
\end{cases}
\end{equation*}
in which $S$ is nonsingular. It suffices to study
$\sigma $-consimilarity transformations
since by Lemma \ref{lih} each involutive automorphism of
$\HH$ has the form $h\mapsto h^{\sigma
}={\sigma }^{-1}h{\sigma }$ ($\sigma
\in\{1,i\}$) in suitable $i,j,k$.

\begin{lemma}[cf. {\cite[Theorem 1]{hua}}]
\label{lit}
The following statements are equivalent
for $A,B\in\HH^{n\times n}$:
\begin{itemize}
  \item[\rm(i)] $A$ and $B$ are
      $i$-consimilar;
  \item[\rm(ii)] $iA$ and $iB$ are
      similar;
  \item[\rm(iii)] $iA$ and $Bi$ are
      similar;
  \item[\rm(iv)] $Ai$ and $Bi$ are
      similar.
\end{itemize}
\end{lemma}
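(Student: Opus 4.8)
The plan is to reduce each of statements (i)--(iv) to ordinary similarity of quaternion matrices and then to chain the resulting equivalences, using only the elementary fact that conjugation by a nonzero scalar matrix transfers a quaternion factor from the left of a matrix to its right (and back).

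First I would prove (i)$\,\Leftrightarrow\,$(ii), which is the $i$-analogue of Huang's remark \eqref{liy}. Since $i^{-1}=-i$, an $i$-consimilarity transformation has the form $A\mapsto\hat S^{-1}AS=(i^{-1}S^{-1}i)AS$ with $S$ nonsingular; setting this equal to $B$ and multiplying on the left by $i$ turns the identity into $S^{-1}(iA)S=iB$, and the step is reversible by multiplying on the left by $i^{-1}$. Hence $A$ and $B$ are $i$-consimilar via $S$ if and only if $iA$ and $iB$ are similar via the same $S$.

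Next I would record that for every nonzero $q\in\HH$ and every $M\in\HH^{n\times n}$ the matrices $qM$ and $Mq$ are similar: conjugating $qM$ by the scalar matrix $qI_n$ gives $(q^{-1}I_n)(qM)(qI_n)=Mq$, since entrywise $q^{-1}(qm_{kl})q=m_{kl}q$. Taking $q=i$ yields $iA\sim Ai$ and $iB\sim Bi$. Because similarity of quaternion matrices is an equivalence relation, combining these with (ii) gives at once (ii)$\,\Leftrightarrow\,$(iii) (insert $iB\sim Bi$), (ii)$\,\Leftrightarrow\,$(iv) (insert $iA\sim Ai$ and $iB\sim Bi$), and therefore the equivalence of all four statements. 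I do not expect a genuine obstacle here; the only point requiring care is the noncommutativity of $\HH$, that is, consistently keeping track of left versus right multiplication by $i$ (or $q$) when moving it past the entries of a matrix.
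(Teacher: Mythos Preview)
Your proof is correct and follows essentially the same approach as the paper: you establish (i)$\Leftrightarrow$(ii) by left-multiplying the $i$-consimilarity relation by $i$, and then obtain the remaining equivalences from the observation that $iM$ and $Mi$ are similar via conjugation by the scalar matrix $iI_n$, chained through transitivity of similarity. The paper carries out exactly these steps, writing the conjugations compactly as $i^{-1}(iB)i=Bi$ and $i^{-1}(iA)i=Ai$; your version just states the scalar-conjugation fact for a general nonzero $q\in\HH$ before specializing to $q=i$.
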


\begin{proof}
(i)$\Leftrightarrow$(ii) since $
S^{-i}AS=-iS^{-1}i\cdot AS=B$ $\
\Leftrightarrow\ $ $S^{-1}iAS=iB$.

(ii)$\Leftrightarrow$(iii) since $
i^{-i}iBi=Bi$.

(iii)$\Leftrightarrow$(iv) since $
i^{-i}iAi=Ai$.
\end{proof}

\begin{theorem}[cf. {\cite[Theorem 3]{hua}}]
\label{lsd} Each square quaternion
matrix $A$ is $\sigma$-consimilar
$(\sigma \in\{1,i\})$ to a complex
matrix that is a direct sum, uniquely
determined up to permutation of
summands, of Jordan blocks
\[
J_k(a+bi),\qquad a,b\in\mathbb R,\
\begin{cases}
b\ge 0&\text{if }\sigma =1,\\
a\ge 0&\text{if }\sigma =i.
\end{cases}
\]
\end{theorem}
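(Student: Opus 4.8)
The plan is to deduce the case $\sigma=i$ from the case $\sigma=1$, which is nothing but the classical Jordan canonical form of quaternion matrices under similarity. The bridge is the observation underlying Lemma~\ref{lit}: multiplying the $\sigma$-consimilarity transformation $A\mapsto\sigma^{-1}S^{-1}\sigma AS$ on the left by $\sigma$ turns it into $\sigma A\mapsto S^{-1}(\sigma A)S$, so \emph{$A$ and $B$ are $\sigma$-consimilar if and only if $\sigma A$ and $\sigma B$ are similar over $\HH$} --- trivially for $\sigma=1$, and by Lemma~\ref{lit}(i)$\Leftrightarrow$(ii) for $\sigma=i$. So the whole statement reduces to the similarity classification together with the change of variable $M\leftrightarrow\sigma M$.

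First I would record the canonical form under $\HH$-similarity: every $M\in\HH^{n\times n}$ is similar over $\HH$ to a complex matrix $\bigoplus_tJ_{k_t}(\mu_t)$, $\mu_t=a_t+b_ti$ with $b_t\ge0$, unique up to permutation of the summands. One may cite this, or obtain it from the $\RR$-algebra embedding $\chi\colon\HH^{n\times n}\hookrightarrow\CC^{2n\times 2n}$, $U+Vj\mapsto\left(\begin{smallmatrix}U&-V\\ \bar V&\bar U\end{smallmatrix}\right)$: one checks that $M\sim_\HH N$ if and only if $\chi(M)\sim_\CC\chi(N)$ (the nontrivial direction by the usual averaging argument: write an invertible complex intertwiner as $Q_1+iQ_2$ with $Q_1,Q_2$ in the image of $\chi$, and note some real combination $Q_1+tQ_2$ is still invertible and lies in the image of $\chi$), and that $\chi(M)=J_0\,\overline{\chi(M)}\,J_0^{-1}$ with $J_0=\chi(jI_n)$, so the complex Jordan form of $\chi(M)$ is invariant under complex conjugation of eigenvalues and has the blocks at each real eigenvalue occurring in pairs; keeping one block from each pair (the one with nonnegative imaginary part from a non-real conjugate pair, or one of the two equal blocks at a real eigenvalue) yields the form, while $J_k(a+bi)\sim_\HH J_k(a-bi)$ (conjugate by $jI_k$) shows the restriction $b\ge0$ loses nothing and makes it canonical.

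Now apply this to $\sigma A$: it is $\HH$-similar to $\bigoplus_tJ_{k_t}(\mu_t)$ with $\operatorname{Im}\mu_t\ge0$, uniquely. For a complex matrix $B=\bigoplus_tJ_{k_t}(\lambda_t)$ one has $\sigma J_k(\lambda)\sim_\CC J_k(\sigma\lambda)$ (conjugate by $\diag(1,\sigma,\dots,\sigma^{k-1})$), so taking $\lambda_t:=\sigma^{-1}\mu_t$ makes $\sigma B$ $\HH$-similar to $\sigma A$, i.e.\ $B$ is $\sigma$-consimilar to $A$, and $B$ is complex. Writing $\lambda_t=a_t'+b_t'i$: for $\sigma=1$ we get $\lambda_t=\mu_t$, hence $b_t'\ge0$; for $\sigma=i$ we get $\lambda_t=-i\mu_t=\operatorname{Im}\mu_t-i\operatorname{Re}\mu_t$, hence $a_t'=\operatorname{Im}\mu_t\ge0$ --- which is exactly the distinction made in the theorem. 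Uniqueness transfers: if two complex direct sums of Jordan blocks obeying the appropriate sign condition are both $\sigma$-consimilar to $A$, then after multiplying by $\sigma$ they become $\HH$-similar complex matrices, each $\CC$-similar to a complex Jordan form with all eigenvalues in the closed upper half-plane (for $\sigma=i$, $\operatorname{Im}(i\lambda_t)=\operatorname{Re}\lambda_t\ge0$); by the uniqueness above these Jordan forms agree up to permutation, and dividing the eigenvalues by $\sigma$ shows the original two direct sums do too.

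The single nontrivial input is the similarity classification used in the second step; if one wants a self-contained proof rather than a citation, the work is entirely in that complex-representation argument --- chiefly the pairing of the real-eigenvalue blocks of $\chi(M)$ and the lifting of a complex similarity to a quaternion one. Everything else --- the scalar bookkeeping relating $\mu_t$ to $\lambda_t=\sigma^{-1}\mu_t$, and checking that the sign conditions transform correctly, including on the boundary where a real $\mu_t$ corresponds to a purely imaginary $\lambda_t$ when $\sigma=i$ --- is routine.
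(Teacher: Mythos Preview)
Your proof is correct and follows essentially the same route as the paper: cite Wiegmann's Jordan form for quaternion matrices under similarity to handle $\sigma=1$ (you additionally sketch a self-contained proof via the complex representation $\chi$), then use Lemma~\ref{lit} to transport the result to $\sigma=i$ via the eigenvalue substitution $\mu\mapsto i^{-1}\mu$. The paper's own proof is a two-line citation of Wiegmann and Lemma~\ref{lit}; your version simply spells out the eigenvalue bookkeeping and the uniqueness argument that the paper leaves implicit.
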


\begin{proof}
Wiegmann \cite[Theorem 1]{wie} proved (see also Zhang's survey \cite[Theorem 6.4]{zha}) that each square
quaternion matrix $A$ is similar to a
direct sum, uniquely determined up to
permutation of summands, of Jordan
blocks $J_k(a+bi),\ a,b\in\mathbb R,\
b\ge 0$. Using Lemma \ref{lit}, we get
the desired Jordan canonical form of
quaternion matrices for
$i$-consimilarity.
\end{proof}

\section{Quaternion matrix equations
$\pmb{AX-\hat XB=C}$ and $\pmb{X-A\hat
XB=C}$}\label{jhf}

In this section, we consider the quaternion matrix
equations
\begin{equation*}\label{jry}
AX-\hat XB=C,\qquad X-A\hat
XB=C,
\end{equation*}
in which $A\in \HH^{m\times m}$, $B\in
\HH^{n\times n}$, $C\in \HH^{m\times
n}$ and $h\mapsto \hat h$ is an
arbitrary involutive automorphism of
$\HH$. These equations for the
automorphism \eqref{mjd} are studied in
\cite{jia,son1,son,song,yua} mainly by
means of the replacement of $p\times q$
quaternion matrices with the
corresponding $2p\times 2q$ complex
matrices or $4p\times 4q$ real
matrices. We  use the consimilarity
canonical form from Theorem \ref{lsd}
(in a similar way, Bevis, Hall, and
Hartwig \cite{bev} studied the complex
matrix equation $A\bar X - XB =C$ using
the consimilarity canonical form of
complex matrices).

By Lemma \ref{lih}, we can suppose that
$\hat h=h^{\sigma}$ for some $\sigma
\in\{1,i\}$ and all $h\in\HH$, and get
the equations
\begin{equation}\label{jjt}
AX-X^{\sigma}B=C,\qquad X-A
X^{\sigma}B=C\quad (\sigma
\in\{1,i\}).
\end{equation}
For all nonsingular $S\in \HH^{m\times
m}$ and $R\in \HH^{n\times n}$, these
equations are equivalent to
\[S^{-\sigma}ASS^{-1}XR
-S^{-\sigma}X^{\sigma}R^{\sigma}
R^{-\sigma}BR=S^{-\sigma}CR,\]
\[
S^{-\sigma}XR-S^{-\sigma}ASS^{-1}
X^{\sigma}R^{\sigma}R^{-\sigma}BR
=S^{-\sigma}CR.
\]
Taking $S$ and $R$ such that
$S^{-\sigma}AS$ and $R^{-\sigma}BR$ are
the complex canonical forms of $A$ and
$B$ determined by Theorem \ref{lsd}, we obtain
the matrix equations that are
considered in the following simple
theorem and its corollaries.

\begin{theorem}\label{kus}
Let the quaternion matrix equations
\eqref{jjt} be given by complex
matrices $A$ and $B$ and a quaternion
matrix $C=C_1+C_2j$ $(C_1,C_2\in\CC^{m\times
n})$. Then the sets of solutions of the equations \eqref{jjt} consist
of all matrices $X=X_1+X_2j$
in which $X_1$
and $X_2$ are complex matrices satisfying
\begin{equation}\label{grp}
AX_1-X_1B=C_1,\qquad AX_2-\sigma^2
X_2\bar B=C_2
\end{equation}
for the first equation in \eqref{jjt}
and, respectively,
\begin{equation}\label{fwe}
X_1-AX_1B=C_1,\qquad
X_2-\sigma^2AX_2\bar B=C_2
\end{equation}
for the second equation in \eqref{jjt}.
\end{theorem}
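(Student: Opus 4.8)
The plan is to work entirely in the real algebra decomposition $\HH = \CC \oplus \CC j$, exploiting the crucial identity recorded just before the definition of $\sigma$-consimilarity: for $h = u + vj$ with $u,v \in \CC$, one has $h^\sigma = u + \sigma^2 v j$ (so $h^\sigma = u + vj$ when $\sigma = 1$ and $h^\sigma = u - vj$ when $\sigma = i$, since $i^2 = -1$). First I would write the unknown as $X = X_1 + X_2 j$ with $X_1, X_2 \in \CC^{m\times n}$, so that $X^\sigma = X_1 + \sigma^2 X_2 j$. Then I substitute into the first equation $AX - X^\sigma B = C$ and expand, using that $A, B$ are now complex matrices. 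The only subtlety in the expansion is moving scalars past $j$: for a complex matrix $M$ we have $jM = \bar M j$ entrywise, because $j\lambda = \bar\lambda j$ for $\lambda \in \CC$. Hence $X_2 j B = X_2 (jB) \cdot$—more carefully, $(X_2 j) B = X_2 (jB) = X_2 \bar B j$, and likewise $(\sigma^2 X_2 j)B = \sigma^2 X_2 \bar B j$ (noting $\sigma^2 \in \RR$ commutes with everything).

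Carrying this out, $AX = AX_1 + AX_2 j$ and $X^\sigma B = X_1 B + \sigma^2 X_2 \bar B j$, so $AX - X^\sigma B = (AX_1 - X_1 B) + (AX_2 - \sigma^2 X_2 \bar B) j$. Comparing the $\CC$-part and the $\CC j$-part with $C = C_1 + C_2 j$ gives exactly the pair \eqref{grp}. The same computation applied to $X - A X^\sigma B$: here $A X^\sigma B = A X_1 B + \sigma^2 A X_2 \bar B j$, so $X - A X^\sigma B = (X_1 - A X_1 B) + (X_2 - \sigma^2 A X_2 \bar B) j$, and matching parts with $C_1 + C_2 j$ yields \eqref{fwe}. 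In each case the decomposition $\HH^{m\times n} = \CC^{m\times n} \oplus \CC^{m\times n} j$ is a direct sum of real vector spaces, so the single quaternion equation is equivalent to the conjunction of its two complex components; thus $X = X_1 + X_2 j$ solves the quaternion equation if and only if $(X_1, X_2)$ solves the stated complex pair, which is precisely the asserted description of the solution set.

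The only step requiring any care is the bookkeeping with the antilinear relation $jM = \bar M j$ for complex $M$ and making sure $A$ and $B$ really are being treated as complex (which is exactly the hypothesis, since we have conjugated by $S$ and $R$ to bring them to the canonical form of Theorem \ref{lsd}); there is no genuine obstacle, as the content is a direct coordinate computation. I would state explicitly at the start that $j\lambda = \bar\lambda j$ and hence $j M = \bar M j$ for $M \in \CC^{p \times q}$, then perform the two expansions and read off \eqref{grp} and \eqref{fwe}.
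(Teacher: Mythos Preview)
Your proposal is correct and follows essentially the same approach as the paper's own proof: write $X=X_1+X_2j$, use $X^{\sigma}=X_1+\sigma^2X_2j$, substitute, apply $jB=\bar Bj$, and split into the $\CC$- and $\CC j$-components. You spell out the direct-sum justification and the antilinearity $j\lambda=\bar\lambda j$ a bit more explicitly than the paper does, but the argument is the same.
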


\begin{proof}
Write $X=X_1+X_2j$ and $C=C_1+C_2j$
$(X_1,X_2,C_1,C_2\in\CC^{m\times n})$. Then
$X^{i}=X_1-X_2j$,
$X^{\sigma}=X_1+\sigma^2X_2j$, and the
equations \eqref{jjt} take the form
\begin{equation}\label{llh}
\begin{matrix}
A(X_1+X_2j)-(X_1+\sigma^2X_2j)B=C_1+C_2j,\\
(X_1+X_2j)-A(X_1+\sigma^2X_2j)B=C_1+C_2j.
\end{matrix}
\end{equation}
Since $A$ and $B$ are complex matrices
and $jB=\bar Bj$, the quaternion matrix
equations \eqref{llh} are partitioned
into two pairs of complex matrix
equations \eqref{grp} and \eqref{fwe}.
\end{proof}

The theory of complex matrix equations
$AX-XB=C$ and $X-AXB=C$ was developed
in \cite[Chapter 18]{dym},
\cite[Chapter VIII]{gan}, \cite[Section
4.4]{j-h1}, \cite[Sections 12.3 and
12.5]{lan}, \cite{jia1,lan1,lan2}, and
in many other books and articles.

\begin{corollary}
\label{kus1} Let us apply Theorem
\ref{kus} to the
    quaternion matrix equation
$AX-X^{\sigma}B=C$ with $\sigma
\in\{1,i\}$. Let
\begin{equation}\label{lir}
A=J_{k_1}(\lambda _1)\oplus\dots\oplus
J_{k_p}(\lambda _p),\quad
B=J_{l_1}(\mu_1)\oplus\dots\oplus
J_{l_q}(\mu_q)\quad (\text{all }\lambda_i,\mu_j\in\CC)
\end{equation}
be two complex Jordan matrices $($for instance, canonical forms from Theorem
\ref{lsd} of quaternion matrices with respect to
$\sigma$-consimilarity$)$.
Write
\begin{equation}\label{lgt}
{\cal M}_{\sigma}:=\{\lambda
_1\dots,\lambda _p\}\cap\{\mu_1,\dots,\mu_q,
\sigma^2\bar\mu_1,\dots,\sigma^2\bar\mu_q\}.
\end{equation}

\begin{itemize}
\item[\rm(a)]
The following statements hold:
\begin{itemize}
\item[$\bullet$] If ${\cal
    M}_{\sigma}=\varnothing$, then
    $AX-X^{\sigma}B=C$ has a unique
    solution.

\item[$\bullet$] If ${\cal
    M}_{\sigma}\ne\varnothing$, then two
    cases may arise: either
    $AX-X^{\sigma}B=C$ has no
    solutions, or the set of its
    solutions is infinite and
    consists of all matrices
    $X_{\circ}+Y$ in which
    $X_{\circ}$ is a fixed
    particular solution of
    $AX-X^{\sigma}B=C$ and $Y$
    runs over all solutions of
    $AY-Y^{\sigma}B=0$.
\end{itemize}

\item[\rm(b)]
The set of solutions of $AY-Y^{\sigma}B=0$ is described as follows. Let us partition $Y$ into blocks in
    accordance with the partitions
    of $A$ and $B$:
\begin{equation}\label{mdu}
    Y=[Y_{\alpha \beta }]_{\alpha
    =1}^p{}_{\beta=1}^q,\qquad Y_{\alpha \beta }
    \text{ is $k_{\alpha }\times l_{\beta }.$}
\end{equation}
Then the set of solutions of
$AY-Y^{\sigma}B=0$ consists of all
quaternion matrices of the form $U+Vj$,
in which
\begin{itemize}
  \item[$\bullet$] $U=[U_{\alpha \beta
      }]_{\alpha
      =1}^p{}_{\beta=1}^q$ and
      $V=[V_{\alpha \beta
      }]_{\alpha
      =1}^p{}_{\beta=1}^q$ are
      complex matrices that are
      partitioned conformally to
      $Y$,
  \item[$\bullet$] $U_{\alpha \beta }=0$ if
      $\lambda _{\alpha}\ne \mu
      _{\beta }$,
  \item[$\bullet$] $V_{\alpha \beta }=0$ if
      $\lambda _{\alpha}\ne
      {\sigma}^2\bar\mu _{\beta }$,
 \item[$\bullet$] the other $U_{\alpha \beta
     }$ and $V_{\alpha \beta }$
     have the form
\[
  \begin{bmatrix}
   &a&b&\ddots&d \\
  &&a&\ddots&\ddots \\
&&&\ddots&b \\
  0&&&&a \\
  \end{bmatrix}\quad\text{or}
  \quad
\begin{bmatrix}
   a&b&\ddots&d \\
  &a&\ddots&\ddots \\
&&\ddots&b \\
  &&&a \\0
  \end{bmatrix}\quad (a,b,\dots,d\in\CC)
\]
if the number of rows is less than
or equal to the number of columns
or, respectively, the number of
rows is greater than the number of
columns $($we write the
off-diagonal units in all Jordan
blocks over the diagonal; see \eqref{kyr}$)$.
\end{itemize}
\end{itemize}
\end{corollary}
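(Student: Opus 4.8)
The plan is to reduce, via Theorem~\ref{kus}, the quaternion equation $AX-X^{\sigma}B=C$ to the two independent complex equations
\[
AX_1-X_1B=C_1,\qquad AX_2-\sigma^2X_2\bar B=C_2\qquad(X=X_1+X_2j,\ C=C_1+C_2j),
\]
and then to feed these into the classical theory of the Sylvester equation $PX-XQ=R$ over $\CC$ (see \cite[Chapter~VIII]{gan}, for instance). Since $B$ is the Jordan matrix \eqref{lir}, $\bar B$ is the Jordan matrix with the same block sizes and eigenvalues $\bar\mu_1,\dots,\bar\mu_q$, while $\sigma^2\bar B$ has block sizes $l_1,\dots,l_q$ and eigenvalues $\sigma^2\bar\mu_1,\dots,\sigma^2\bar\mu_q$: it is itself a Jordan matrix when $\sigma=1$, and for $\sigma=i$ it equals $-\bar B$, which is conjugate to $J_{l_1}(-\bar\mu_1)\oplus\dots\oplus J_{l_q}(-\bar\mu_q)$ by a block-diagonal matrix with entries $\pm1$. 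Thus the first complex equation is governed by the eigenvalues $\lambda_i$ of $A$ and $\mu_j$ of $B$, and the second by the $\lambda_i$ and the $\sigma^2\bar\mu_j$.

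For part~(a), the linear map $X\mapsto AX-XB$ on $\CC^{m\times n}$ is bijective if and only if $A$ and $B$ have no eigenvalue in common, that is, $\{\lambda_1,\dots,\lambda_p\}\cap\{\mu_1,\dots,\mu_q\}=\varnothing$, and likewise $X\mapsto AX-\sigma^2X\bar B$ is bijective if and only if $\{\lambda_1,\dots,\lambda_p\}\cap\{\sigma^2\bar\mu_1,\dots,\sigma^2\bar\mu_q\}=\varnothing$. The conjunction of these two conditions is exactly ${\cal M}_{\sigma}=\varnothing$ (see \eqref{lgt}), and then each complex equation, hence $AX-X^{\sigma}B=C$, has a unique solution. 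If ${\cal M}_{\sigma}\ne\varnothing$, at least one of the two complex maps is not injective; for each complex equation separately one has the usual alternative --- it is either inconsistent, or has solution set a coset of its (possibly trivial) kernel. Consequently $AX-X^{\sigma}B=C$ has no solution as soon as one of the two complex equations is inconsistent, and otherwise, fixing a particular solution $X_{\circ}$, its solution set equals $X_{\circ}+\{Y:AY-Y^{\sigma}B=0\}$; this set is infinite because ${\cal M}_{\sigma}\ne\varnothing$ forces at least one of $AY_1-Y_1B=0$ and $AY_2-\sigma^2Y_2\bar B=0$ to have a nonzero solution.

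For part~(b), apply the $C=0$ case of Theorem~\ref{kus}: $Y=U+Vj$ solves $AY-Y^{\sigma}B=0$ if and only if $U$ solves $AU=UB$ and $V$ solves $AV=V(\sigma^2\bar B)$. For two complex Jordan matrices $A=J_{k_1}(\lambda_1)\oplus\dots\oplus J_{k_p}(\lambda_p)$ and $B=J_{l_1}(\mu_1)\oplus\dots\oplus J_{l_q}(\mu_q)$, the solution space of $AU=UB$ has the classical description (see \cite[Chapter~VIII]{gan}): in the partition $U=[U_{\alpha\beta}]$ conformal with $A$ and $B$, the block $U_{\alpha\beta}$ is zero unless $\lambda_{\alpha}=\mu_{\beta}$, in which case it runs over the $\min(k_{\alpha},l_{\beta})$-dimensional space of rectangular upper triangular Toeplitz matrices of the two shapes displayed in the statement (band flush to the right if $k_{\alpha}\le l_{\beta}$, flush to the top otherwise). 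Applying this to $AU=UB$ gives the asserted description of $U$, and applying the analogous result to $AV=V(\sigma^2\bar B)$, using that $\sigma^2\bar B$ has block sizes $l_{\beta}$ and eigenvalues $\sigma^2\bar\mu_{\beta}$, gives $V_{\alpha\beta}=0$ unless $\lambda_{\alpha}=\sigma^2\bar\mu_{\beta}$ and otherwise the same Toeplitz shape. Reassembling $Y=U+Vj$ yields~(b).

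Everything here is a routine combination of Theorem~\ref{kus}, Sylvester-equation theory, and the standard block description of $\{U:AU=UB\}$ for Jordan matrices; the only point that demands care is the case $\sigma=i$, where $\sigma^2\bar B=-\bar B$ becomes a genuine Jordan matrix only after the harmless $\pm1$ diagonal similarity that rescales the columns inside each Jordan block of $\bar B$. One has to note that this rescaling changes neither the zero-pattern of the blocks $V_{\alpha\beta}$ nor the dimension $\min(k_{\alpha},l_{\beta})$ of the space they fill, so that part~(b) is obtained with the displayed block shapes (up to these signs).
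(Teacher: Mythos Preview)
Your argument is essentially the paper's own proof: apply Theorem~\ref{kus} to split the quaternion equation into the two complex Sylvester equations~\eqref{grp}, then cite \cite[Chapter~VIII, \S\S\,1,3]{gan} for the uniqueness criterion (no common eigenvalues) and for the block-Toeplitz description of all $Y$ with $MY=YN$ when $M,N$ are Jordan matrices. The one point you treat more carefully than the paper is the observation that for $\sigma=i$ the matrix $\sigma^{2}\bar B=-\bar B$ is not literally a Jordan matrix and must first be brought to Jordan form by a diagonal $\pm1$ similarity before Gantmacher's description applies; the paper's proof passes over this in silence.
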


\begin{proof}
The statement (a) follows from
Theorem \ref{kus} since by
\cite[Chapter VIII, \S\,3]{gan} a
complex matrix equation $MX-XN=P$ with square matrices $M$ and $N$ has a
unique complex solution if and only if
$M$ and $N$ do not have common
eigenvalues; otherwise it is
contradictory or it has an infinite
number of solutions.

The statement (b) follows from Theorem
\ref{kus} and from the description in
\cite[Chapter VIII, \S\,1]{gan} of the
set of all complex matrices $Y$ such
that $MY=YN$ in which $M$ and $N$ are
complex Jordan matrices.
\end{proof}

\begin{example}
Let us consider the quaternion matrix equation
\begin{equation}\label{nfh}
\begin{bmatrix}
    0&0\\0&i
  \end{bmatrix}X-X^{\sigma}\begin{bmatrix}
    i&1\\0&i
  \end{bmatrix}=\begin{bmatrix}
    -k&j\\0&0
  \end{bmatrix}\qquad (\sigma
\in\{1,i\}).
\end{equation}
Its set \eqref{lgt} is $\{i\}$. Its particular solution is
\[
\begin{bmatrix}
    -j&0\\0&0
  \end{bmatrix} \text{ if }\sigma =1,\qquad
\begin{bmatrix}
    j&0\\0&j
  \end{bmatrix} \text{ if }\sigma =i.
\]
To solve the corresponding equation in which the right-hand part is zero:
\begin{equation}\label{dgt}
\begin{bmatrix}0&0\\0&i
 \end{bmatrix}Y-
  Y^{\sigma}\begin{bmatrix}
    i&1\\0&i
 \end{bmatrix}=
\begin{bmatrix}
    0&0\\0&0
 \end{bmatrix},
\end{equation}
we write $Y=U+Vj$ ($U$ and $V$ are complex matrices) and partition it as in \eqref{mdu}:
\[
Y=\begin{bmatrix}
    Y_{11}\\\hline Y_{21}
  \end{bmatrix}=
\begin{bmatrix}
    U_{11}\\\hline U_{21}
  \end{bmatrix}+
\begin{bmatrix}
    V_{11}\\\hline V_{21}
  \end{bmatrix}j.
\]
Replacing these blocks by the corresponding pairs of eigenvalues (in the notation of Corollary \ref{kus1}(b)), we get
\[
\begin{bmatrix}
    U_{11}\\\hline U_{21}
  \end{bmatrix}
\ \mapsto\
\begin{bmatrix}
    \lambda _1,\ \mu _1\\\hline
    \lambda _2,\ \mu _1
  \end{bmatrix}=
\begin{bmatrix}
    0,\ i\\\hline
    i,\ i
  \end{bmatrix},
\qquad
\begin{bmatrix}
    V_{11}\\\hline V_{21}
  \end{bmatrix}
\ \mapsto\
\begin{bmatrix}
    \lambda _1,\ \delta ^2\bar \mu _1\\\hline
    \lambda _2,\ \delta ^2\bar \mu _1
  \end{bmatrix}=
\begin{bmatrix}
    0,\ -\sigma ^2i\\\hline
    i,\ -\sigma ^2i
  \end{bmatrix}.
\]
By Corollary  \ref{kus1}, the set of solutions of \eqref{dgt} consists of all matrices
\[
\begin{bmatrix}
    0&0\\\hline 0&*
  \end{bmatrix}+
\begin{bmatrix}
    0&0\\\hline 0&0
  \end{bmatrix}j\
\text{ if }\sigma =1,\qquad\begin{bmatrix}
    0&0\\\hline 0&*
  \end{bmatrix}+
\begin{bmatrix}
    0&0\\\hline 0&*
  \end{bmatrix}j \ \text{ if }\sigma =i,
\]
in which the stars denote complex numbers,
and the set of solutions of \eqref{nfh} consists of all matrices
\[
X=\begin{cases}
\begin{bmatrix}
    -j&0\\0&0
  \end{bmatrix}+\begin{bmatrix}
    0&0\\0&c
  \end{bmatrix}\ (c\in\CC)& \text{if }\sigma =1,\\[15pt]
\begin{bmatrix}
    j&0\\0&j
  \end{bmatrix}+\begin{bmatrix}
    0&0\\0&h
  \end{bmatrix}\ (h\in\HH)& \text{if }\sigma =i.
\end{cases}
\]
\end{example}

\begin{corollary}
\label{kusu}
Let us apply Theorem
\ref{kus} to the
    quaternion matrix equation
$X-AX^{\sigma}B=C$  in which $\sigma
\in\{1,i\}$ and $A,B$ are the matrices \eqref{lir}.
Write
\begin{equation*}\label{krqs}
{\cal M}_{\sigma}:=\{\lambda^{-1}
_1\dots,\lambda _p^{-1}\}\cap\{\mu_1,\dots,
\mu_q,
{\sigma}^2\bar\mu_1,\dots,
{\sigma}^2\bar\mu_q\}
\end{equation*}
in which $0^{-1}:=\infty$.

The following statements hold:
\begin{itemize}
\item[$\bullet$] If ${\cal
    M}_{\sigma}=\varnothing$, then
    $X-AX^{\sigma}B=C$ has a unique
    solution.

\item[$\bullet$] If ${\cal
    M}_{\sigma}\ne\varnothing$, then two
    cases may arise: either
    $X-AX^{\sigma}B=C$ has no
    solutions, or the set of its
    solutions is infinite and
    consists of all matrices
    $X_{\circ}+Y$ in which
    $X_{\circ}$ is a fixed
    particular solution of
    $X-AX^{\sigma}B=C$ and $Y$
    runs over all solutions of
    $Y-AY^{\sigma}B=0$.
\end{itemize}
\end{corollary}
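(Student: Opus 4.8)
The plan is to follow the proof of Corollary \ref{kus1}, with the Sylvester equation replaced by the equation $X-MXN=P$. By Theorem \ref{kus}, writing $X=X_1+X_2j$ and $C=C_1+C_2j$ with $X_1,X_2,C_1,C_2\in\CC^{m\times n}$, the solution set of $X-AX^{\sigma}B=C$ consists of all $X_1+X_2j$ in which $X_1$ solves $X_1-AX_1B=C_1$ and $X_2$ solves $X_2-AX_2(\sigma^2\bar B)=C_2$; here the second equation is \eqref{fwe} rewritten so that both complex equations have the form $X-MXN=P$, the first with $(M,N)=(A,B)$ and the second with $(M,N)=(A,\sigma^2\bar B)$ (recall that $\sigma^2\in\RR$, so the eigenvalues of $\sigma^2\bar B$ are $\sigma^2\bar\mu_1,\dots,\sigma^2\bar\mu_q$).

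Next I would recall that a complex equation $X-MXN=P$ with square $M,N$ has a unique solution if and only if $\lambda\mu\ne1$ for all eigenvalues $\lambda$ of $M$ and $\mu$ of $N$, and is otherwise either inconsistent or has infinitely many solutions; this follows from vectorization, which turns the equation into the linear system $(I-N^{T}\otimes M)\operatorname{vec}(X)=\operatorname{vec}(P)$ whose coefficient matrix is nonsingular exactly when $1$ is not an eigenvalue of $N^{T}\otimes M$, that is, when no product $\mu\lambda$ equals $1$ (see also \cite[Chapter VIII]{gan} and the references on $X-AXB=C$ cited before Corollary \ref{kus1}). Since $\lambda\mu\ne1$ is the same as $\lambda^{-1}\ne\mu$ with the convention $0^{-1}:=\infty$, the two complex equations above both have a unique solution precisely when ${\cal M}_{\sigma}=\varnothing$, and then $X-AX^{\sigma}B=C$ has a unique solution. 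If ${\cal M}_{\sigma}\ne\varnothing$, then at least one of the two complex equations has a singular operator, and hence $X-AX^{\sigma}B=C$ is either inconsistent or has infinitely many solutions.

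Finally, I would describe the solution set in the consistent case: since $X\mapsto X^{\sigma}$ is additive, the difference of any two solutions of $X-AX^{\sigma}B=C$ solves $Y-AY^{\sigma}B=0$, and conversely adding any such $Y$ to a solution produces a solution; hence the solution set equals $X_{\circ}+\{Y:Y-AY^{\sigma}B=0\}$ for any fixed particular solution $X_{\circ}$, and it is infinite because at least one of the homogeneous complex equations $Y_1-AY_1B=0$ and $Y_2-AY_2(\sigma^2\bar B)=0$ has a solution space of positive dimension over $\CC$. The argument is essentially a transcription of the proof of Corollary \ref{kus1}; the only points needing care are matching the reciprocal-eigenvalue criterion for $X-MXN=P$ to the definition of ${\cal M}_{\sigma}$ and handling the convention $0^{-1}=\infty$, which causes no trouble since a zero eigenvalue $\lambda_i=0$ of $A$ gives $\lambda_i\mu_j=0\ne1$ for every $j$, consistently with $\infty\notin\{\mu_j,\sigma^2\bar\mu_j\}$.
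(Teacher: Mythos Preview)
Your argument is correct and follows essentially the same route as the paper: reduce via Theorem~\ref{kus} to the two complex equations $X_1-AX_1B=C_1$ and $X_2-\sigma^2AX_2\bar B=C_2$, then invoke the classical criterion that $X-MXN=P$ has a unique solution iff $\lambda\mu\ne 1$ for all eigenvalues $\lambda$ of $M$ and $\mu$ of $N$. The paper simply cites \cite[Theorem~18.2]{dym} for this fact, whereas you supply the vectorization/Kronecker argument and spell out the affine-space structure and the $0^{-1}=\infty$ convention; these are welcome elaborations but not a different method.
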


\begin{proof} These statements hold
since by \cite[Theorem 18.2]{dym} a
complex matrix equation $X-MXN=P$ with square $M$ and $N$ has a
unique complex solution if and only if
$\lambda \mu \ne 1$ for each eigenvalue
$\lambda $ of $M$ and for each
eigenvalue $\mu$ of $N$.
\end{proof}

\bigskip

\noindent{\bf Conclusion:} In all the papers that we know, the consimilarity of quaternion matrices is defined via the automorphism $h\mapsto -jhj$. We show that the consimilarity defined via $h\mapsto -ihi$ is more convenient for studying. All consimilarities of quaternion matrices defined via involutive automorphisms are reduced each other by reselecting of orthogonal
imaginary units $i,j,k$ in $\HH$.
\bigskip

\noindent{\bf Acknowledgment:}
The authors gratefully acknowledge the referees for their constructive advices.

\end{document}